\documentclass{amsart}
\usepackage{mathtools}
\usepackage{amsmath}
\usepackage{amssymb}
\usepackage{amsthm}
\usepackage{physics}
\usepackage{graphicx}

\usepackage[colorlinks]{hyperref}

\graphicspath{{fig/}}

\newtheorem{theorem}{Theorem}[section]

\theoremstyle{definition}

\newtheorem{example}[theorem]{Example}

\theoremstyle{remark}
\newtheorem{remark}[theorem]{Remark}

\numberwithin{equation}{section}

\begin{document}

\title{Complex harmonic mean}

\author{Atsushi~Nakayasu}
\address{Graduate School of Engineering, The University of Tokyo, Yayoi 2-11-16, Bunkyo–ku, Tokyo 113-8656, Japan}
% \curraddr{}
\email{ankys@g.ecc.u-tokyo.ac.jp}

\keywords{harmonic mean, complex-valued random variable, inversions}
\subjclass[2020]{Primary 47A64; Secondary 60A10, 30C10}
% 47A64: Operator means involving linear operators, shorted linear operators, etc.
% 60A10: Probabilistic measure theory
% 30C10: Polynomials and rational functions of one complex variable

\date{\today}

%\dedicatory{}

%\commby{}

\begin{abstract}
We study the harmonic mean of non-zero complex-valued random variables (complex harmonic mean)
and establish several geometric estimates and bounds.
In contrast to the classical positive-valued case, complex harmonic means may lie outside the convex hull of the range.
We prove that if the range is contained in a disk not containing the origin,
then the complex harmonic mean is confined to the same disk.
This result is based on the behavior of disks under inversion and convexity arguments.
Further estimates involving the modulus and the real part are obtained, and the two-point case is analyzed explicitly, revealing a circular structure.
Several examples are provided to illustrate the distinctive features of complex harmonic means.
\end{abstract}

\maketitle

\section{Introduction}

In this short paper, we study the harmonic mean in the complex-valued setting
and show that it satisfies simple geometric bounds.

A \emph{(classical) harmonic mean} is usually defined for a positive-valued ($(0, \infty)$-valued) random variable $X$
and its definition is given by
\[
\mathbf{H}[X] := \mathbf{E}[X^{-1}]^{-1}
\]
if the expectation $\mathbf{E}[X^{-1}]$ exists as a positive number.
A \emph{complex harmonic mean} is defined for a non-zero complex-valued ($\mathbb{C}\setminus\{0\}$-valued) random variable $Z$ in a similar way
\[
\mathbf{H}[Z] := \mathbf{E}[Z^{-1}]^{-1}
\]
if the expectation $\mathbf{E}[Z^{-1}]$ exists as a non-zero complex number.
Obviously, if the range of a complex-valued random variable $Z$, which is the support of the distribution denoted by $\operatorname{Range}[Z]$, is contained in the positive numbers $(0, \infty)$, then the complex harmonic mean is nothing but the classical harmonic mean.
Note that a sufficient condition for the existence of a complex harmonic mean of $Z$ is that the range satisfies the condition
\[
\operatorname{Range}[Z] \subset \{ z \in \mathbb{C} \mid \Re z \ge a \}\cap\{ z \in \mathbb{C} \mid \abs{z} \le R \}
\]
for some $a > 0$ and $R < \infty$.

The harmonic mean has numerous applications,
and especially in the theory of homogenization, it appears as the Reuss bound of an effective coefficient \cite[Subsection 1.1.3]{r_a02}.
The complex harmonic mean should naturally appear when we consider homogenization of complex equations \cite{r_isf15}.
In fact, effective coefficients with positive real part in the complex setting have been well studied in the theory of viscoelastic composites \cite{r_gl93}, \cite{r_m02}.

For harmonic mean of a positive-valued random variable $X$, several bounds or estimates are known.
Trivial ones are the bounds by range:
\[
\inf \operatorname{Range}[X] \le \mathbf{H}[X] \le \sup \operatorname{Range}[X].
\]
(Here, the range $\operatorname{Range}[X]$ is a subset of positive numbers $(0, \infty)$.)
Another estimate is obtained from Jensen's inequality $\phi(\mathbf{E}[X]) \le \mathbf{E}[\phi(X)]$ for the convex function $\phi(x) = x^{-1}$ on $(0, \infty)$:
\[
\mathbf{H}[X] = \mathbf{E}[X^{-1}]^{-1} \le \mathbf{E}[(X^{-1})^{-1}] = \mathbf{E}[X].
\]
The equality holds if and only if $X$ is constant almost surely.

For complex harmonic mean, however, it is not trivial to derive bounds or estimates
and there are few results as far as the author knows.
% To the author’s knowledge, systematic bounds for harmonic means of complex-valued random variables have not been investigated.
We expect that such bounds are useful in studying effective coefficients for complex-valued equations.

This paper is organized as follows.
Section \ref{s_ex} provides several examples of complex harmonic means for two-point and continuous distributions,
illustrating their distinctive behavior compared with the positive-valued case with some figures.
Sections \ref{s_est_mod} and \ref{s_est_real} derive basic estimates involving the modulus and the real part.
Section \ref{s_bd_circ} contains our main geometric bound, showing that complex harmonic means inherit disk constraints from the range via inversion and convexity arguments.
Finally, in Section \ref{s_twoval} we revisit the two-point case and describe the resulting circular structure explicitly.

\section{Examples}
\label{s_ex}

We present several examples of complex harmonic means,
which illustrate that the complex harmonic mean behaves quite differently from the positive-valued one.

Consider the case when the range of a non-zero complex-valued random variable $Z$ consists of two values $c_1, c_2$, i.e. $\operatorname{Range}[Z] = \{ c_1, c_2 \}$
and let $\theta \in [0, 1]$ be the probability that $Z = c_2$.
Then, the complex harmonic mean $h = \mathbf{H}[Z]$ is given by
\[
h
= ((1-\theta) c_1^{-1}+\theta c_2^{-1})^{-1}
= \frac{c_1 c_2}{c_1\theta+c_2(1-\theta)}.
\]

\begin{example}
We now give the concrete example $c_1 = 1+i$, $c_2 = 1-i$.
Then, the complex harmonic mean $h = \mathbf{H}[Z]$ is calculated as
\[
h
= \frac{(1+i)(1-i)}{(1+i)\theta+(1-i)(1-\theta)}
= \frac{2}{1-(1-2\theta)i}
= \frac{1+(1-2\theta)i}{2\theta^2-2\theta+1}.
\]
In particular, for $\theta = 1/2$ we have $h = 2$.
This example shows that the real part of the complex harmonic mean may exceed the supremum of the real parts of the range.
We also remark that the complex harmonic mean $h$ is always on the circle
\[
\abs{z-1} = 1,
\]
which goes through $c_1$, $c_2$ and the origin $0$.
See Figure \ref{f_ex1}.
\end{example}

\begin{figure}
\includegraphics[width=0.75\textwidth]{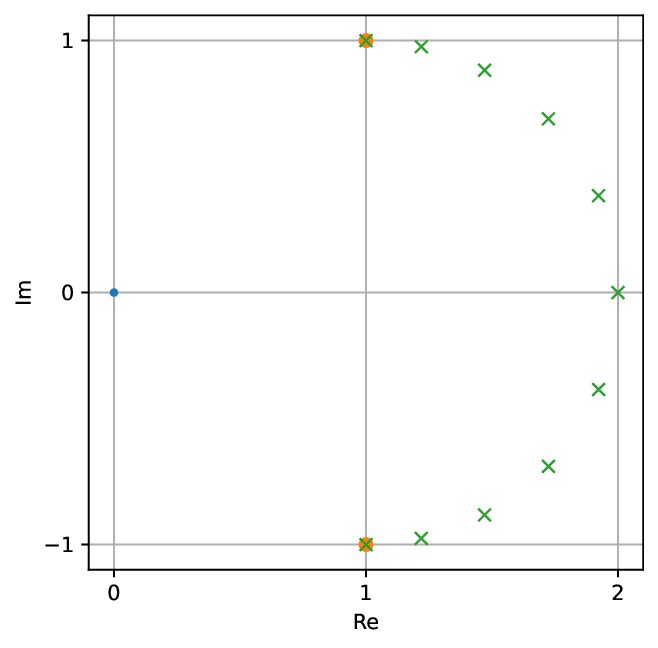}
\caption{Complex harmonic mean of $c_1 = 1+i$ and $c_2 = 1-i$ with the weight $\theta = 0.0, 0.1, \cdots, 1.0$.}
\label{f_ex1}
\end{figure}

\begin{example}
For another example, let $c_1 = 8$, $c_2 = 1+i$.
Then, the complex harmonic mean $h = \mathbf{H}[Z]$ is calculated as
\[
h
= \frac{8(1+i)}{8\theta+(1+i)(1-\theta)}
= \frac{8(1+3\theta)+32\theta i}{25\theta^2+6\theta+1}.
\]
Note that the imaginary part of the complex harmonic mean $(32\theta)/(25\theta^2+6\theta+1)$ attains its maximum at $\theta = 1/5$ and at this time $h = 4+2 i$.
This example shows that the imaginary part of the complex harmonic mean may exceed the supremum of the imaginary parts of the range.
We also remark that the complex harmonic mean $h$ is always on the circle
\[
\abs{z-(4-3 i)} = 5,
\]
which goes through $c_1$, $c_2$ and the origin $0$.
See Figure \ref{f_ex2}.
\end{example}

\begin{figure}
\includegraphics[width=0.9\textwidth]{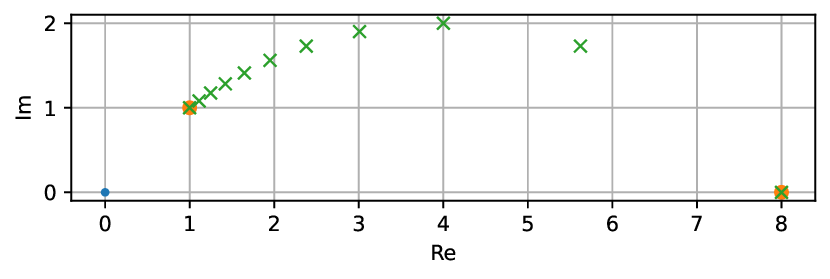}
\caption{Complex harmonic mean of $c_1 = 8$ and $c_2 = 1+i$ with the weight $\theta = 0.0, 0.1, \cdots, 1.0$.}
\label{f_ex2}
\end{figure}

We also study complex harmonic means for continuous distributions.

\begin{example}[Complex lognormal distribution]
Let $Z$ be a random variable with complex normal distribution $\mathcal{CN}(\mu, \sigma)$ with mean $\mu \in \mathbb{C}$ and variance $\sigma > 0$,
and consider the non-zero complex-valued random variable $\exp(Z)$ with lognormal distribution.
Note that the probability density function of $Z$ is given by
\[
f(z) = \frac{1}{\pi\sigma}\exp\qty(-\frac{\abs{z-\mu}^2}{\sigma}).
\]
Therefore, by a standard computation we have
\[
\begin{aligned}
\mathbf{E}[\exp(Z)]
&= \int_{\mathbb{C}} \exp(z)\frac{1}{\pi\sigma}\exp\qty(-\frac{\abs{z-\mu}^2}{\sigma})\dd{z}
= \frac{1}{\pi}e^{\mu}\int_{\mathbb{C}} \exp(-\abs{z}^2+\sqrt{\sigma}z)\dd{z} \\
&= \frac{1}{\pi}e^{\mu}\int_{\mathbb{R}} \exp(-x^2+\sqrt{\sigma}x)\dd{x}\int_{\mathbb{R}} \exp(-y^2+\sqrt{\sigma}y i)\dd{y} \\
&= \frac{1}{\pi}e^{\mu} \cdot \sqrt{\pi}e^{\frac{\sigma}{4}} \cdot \sqrt{\pi}e^{-\frac{\sigma}{4}}
= e^{\mu}.
\end{aligned}
\]
Similarly, we have
\[
\mathbf{H}[\exp(Z)]
= \mathbf{E}[(\exp(Z))^{-1}]^{-1}
= \mathbf{E}[\exp(-Z)]^{-1}
= \qty(e^{-\mu})^{-1}
= e^{\mu}.
\]
These computations show that the arithmetic mean $\mathbf{E}[\exp(Z)]$ and the harmonic mean $\mathbf{H}[\exp(Z)]$ can be the same for a complex lognormal distribution,
which never occurs for positive-valued settings unless trivial cases.
\end{example}

\section{Estimates by modulus}
\label{s_est_mod}

The first result in this paper is the estimate of complex harmonic mean in terms of the modulus:
\begin{equation}
\label{e_estabs}
\abs{\mathbf{H}[Z]} \ge \mathbf{H}[\abs{Z}].
\end{equation}

\begin{theorem}[Estimates by modulus]
For a non-zero complex-valued random variable $Z$, which has a complex harmonic mean $\mathbf{H}[Z]$,
the inequality \eqref{e_estabs} holds.
The equality holds when $Z = v X$ for some (constant) non-zero complex number $v$ and positive-valued random variable $X$.
\end{theorem}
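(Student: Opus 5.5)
The plan is to reduce \eqref{e_estabs} to the triangle inequality for expectations, applied to $Z^{-1}$. Since $Z$ has a complex harmonic mean, $\mathbf{E}[Z^{-1}]$ exists as a non-zero complex number, so $\abs{\mathbf{H}[Z]} = \abs{\mathbf{E}[Z^{-1}]}^{-1}$. The triangle inequality for the integral gives
\[
0 < \abs{\mathbf{E}[Z^{-1}]} \le \mathbf{E}[\abs{Z^{-1}}] = \mathbf{E}[\abs{Z}^{-1}].
\]
This holds whenever $Z^{-1}$ is integrable, which is implicit in the existence of $\mathbf{E}[Z^{-1}]$; that integrability also makes $\mathbf{E}[\abs{Z}^{-1}]$ finite. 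The right-hand side is then a positive number, because the left-hand side is positive. Hence the positive-valued random variable $\abs{Z}$ has a classical harmonic mean $\mathbf{H}[\abs{Z}] = \mathbf{E}[\abs{Z}^{-1}]^{-1}$.

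Next, take reciprocals. The map $t \mapsto t^{-1}$ is decreasing on $(0,\infty)$, so the displayed inequality gives
\[
\abs{\mathbf{H}[Z]} = \abs{\mathbf{E}[Z^{-1}]}^{-1} \ge \mathbf{E}[\abs{Z}^{-1}]^{-1} = \mathbf{H}[\abs{Z}],
\]
which is \eqref{e_estabs}.

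For the equality case, suppose $Z = vX$ with $v \in \mathbb{C}\setminus\{0\}$ and $X$ positive-valued. Then $Z^{-1} = v^{-1}X^{-1}$, so $\mathbf{E}[Z^{-1}] = v^{-1}\mathbf{E}[X^{-1}]$ and $\mathbf{H}[Z] = v\,\mathbf{H}[X]$. Also $\abs{Z} = \abs{v}X$, which gives $\mathbf{H}[\abs{Z}] = \abs{v}\,\mathbf{H}[X]$. Taking moduli, $\abs{\mathbf{H}[Z]} = \abs{v}\,\mathbf{H}[X] = \mathbf{H}[\abs{Z}]$.

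There is no real obstacle; the only point needing care is that the classical harmonic mean $\mathbf{H}[\abs{Z}]$ exists, and this was handled in the first step. Optionally, one can add the converse. Equality in the triangle inequality $\abs{\mathbf{E}[W]} = \mathbf{E}[\abs{W}]$ forces $W$ to have almost surely constant argument. Applied to $W = Z^{-1}$, this shows that equality in \eqref{e_estabs} occurs exactly when $Z = vX$ almost surely.
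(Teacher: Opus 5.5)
Your proof is correct and is essentially the paper's argument. The paper first rewrites $\mathbf{H}[Z]$ via $Z^{-1}=\overline{Z}/\abs{Z}^2$ to get $\abs{\mathbf{H}[Z]} = 1/\abs{\mathbf{E}[Z/\abs{Z}^2]}$, which equals $1/\abs{\mathbf{E}[Z^{-1}]}$, and then applies the same triangle inequality $\abs{\mathbf{E}[W]}\le\mathbf{E}[\abs{W}]$ and its equality case; your version is more direct and also makes explicit that $\mathbf{H}[\abs{Z}]$ exists and that $Z=vX$ indeed gives equality, points the paper leaves implicit.
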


\begin{proof}
Noting that
\[
Z^{-1} = \frac{\overline{Z}}{\abs{Z}^2},
\]
we have
\begin{equation}
\label{e_calc_chm}
\mathbf{H}[Z]
= \mathbf{E}\qty[\frac{\overline{Z}}{\abs{Z}^2}]^{-1}
= \frac{\overline{\mathbf{E}\qty[\frac{\overline{Z}}{\abs{Z}^2}]}}{\abs{\mathbf{E}\qty[\frac{\overline{Z}}{\abs{Z}^2}]}^2}
= \frac{\mathbf{E}\qty[\frac{Z}{\abs{Z}^2}]}{\abs{\mathbf{E}\qty[\frac{\overline{Z}}{\abs{Z}^2}]}^2}
\end{equation}
and hence
\[
\abs{\mathbf{H}[Z]}
= \frac{1}{\abs{\mathbf{E}\qty[\frac{Z}{\abs{Z}^2}]}}.
\]
Meanwhile,
\[
\mathbf{H}[\abs{Z}]
= \frac{1}{\mathbf{E}\qty[\frac{1}{\abs{Z}}]}.
\]
Now, in view of the triangle inequality, we have
\[
\abs{\mathbf{E}\qty[\frac{Z}{\abs{Z}^2}]}
\le \mathbf{E}\qty[\abs{\frac{Z}{\abs{Z}^2}}]
= \mathbf{E}\qty[\frac{1}{\abs{Z}}],
\]
which implies \eqref{e_estabs}.
The condition of equality follows from the equality condition in the triangle inequality.
\end{proof}

\section{Estimates by real part}
\label{s_est_real}

Next result of this paper is the estimate of complex harmonic mean by harmonic mean of real part:
\begin{equation}
\label{e:boundreal}
\Re \mathbf{H}[Z] \ge \mathbf{H}[\Re Z].
\end{equation}
We establish the following theorem with a small generalization.
For complex numbers $c$ and $z$ we define the inner product into real by $c\cdot z = \Re \bar{c}z$.

\begin{theorem}[Estimates by real part]
Let $c \in \mathbb{C}\setminus\{0\}$.
For a non-zero complex-valued random variable $Z$ with range
\[
\operatorname{Range}[Z] \subset \{ z \in \mathbb{C} \mid c\cdot z \ge a \}\cap\{ z \in \mathbb{C} \mid \abs{z} \le R \}
\]
with some $a > 0$ and $R < \infty$,
the inequality
\[
c\cdot \mathbf{H}[Z] \ge \mathbf{H}[c\cdot Z]
\]
holds.
The equality holds when $Z = v X$ for some (constant) non-zero complex number $v$ and positive-valued random variable $X$.
\end{theorem}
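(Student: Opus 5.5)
The plan is to reduce to the case $c = 1$ by rotation and scaling, and then obtain the real-part bound from a pointwise inequality combined with the explicit formula \eqref{e_calc_chm}.

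\emph{Reduction.} Put $W = \bar{c} Z$. Then $c\cdot z = \Re(\bar{c}z)$, and the hypothesis says $\Re W \ge a$ and $\abs{W} \le \abs{c}R$. Since $\mathbf{H}[\bar{c}Z] = \bar{c}\,\mathbf{H}[Z]$, we get $c\cdot\mathbf{H}[Z] = \Re\mathbf{H}[W]$, and we also have $c\cdot Z = \Re W$. So it suffices to prove $\Re\mathbf{H}[W] \ge \mathbf{H}[\Re W]$ whenever $\Re W \ge a > 0$ and $W$ is bounded. The hypotheses guarantee that everything exists: $\mathbf{E}[W^{-1}]$ has real part $\mathbf{E}[\Re W/\abs{W}^2] \ge a/(\abs{c}R)^2 > 0$, so it is non-zero, and $\mathbf{E}[(\Re W)^{-1}] \le 1/a$ is finite and positive.

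\emph{Main step.} Write $W = X + iY$ and $E := \mathbf{E}[W^{-1}] = \mathbf{E}[X/\abs{W}^2] - i\,\mathbf{E}[Y/\abs{W}^2] =: \alpha - i\beta$. Then
\[
\Re\mathbf{H}[W] = \Re(1/E) = \frac{\alpha}{\alpha^2+\beta^2}, \qquad \mathbf{H}[X] = \frac{1}{\mathbf{E}[1/X]},
\]
so the goal is the inequality
\[
\alpha\,\mathbf{E}[1/X] \ge \alpha^2+\beta^2 .
\]
The key pointwise observation is that $1/X$ dominates a quadratic expression in $(X/\abs{W}^2, Y/\abs{W}^2)$. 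Indeed, $\frac{1}{X} = \frac{X^2+Y^2}{X\abs{W}^2}$, that is,
\[
\frac{X}{\abs{W}^2} = \frac{1}{X}\left(\frac{X}{\abs{W}^2}\right)^2\!\big/\!\left(\frac{1}{\abs{W}^2}\right)\cdots,
\]
which is getting messy. A cleaner route is to treat $1/w$ as a point of the disk $\{\,\Re u \ge \abs{u}^2\,\}$ rescaled: for $x>0$, the inequality $\Re(1/w) = x/\abs{w}^2$ together with $\abs{1/w}^2 = 1/\abs{w}^2$ gives $\abs{1/w}^2 = \Re(1/w)\cdot(1/x)$.

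So set $U = 1/W$ and $T = 1/X$. Pointwise we then have $\abs{U}^2 = T\,\Re U$, and we need $\abs{\mathbf{E}U}^2 \le \mathbf{E}[T]\,\mathbf{E}[\Re U]$. By Cauchy--Schwarz,
\[
\abs{\mathbf{E}U} \le \mathbf{E}\abs{U} = \mathbf{E}\bigl[\sqrt{T}\sqrt{\Re U}\bigr] \le \sqrt{\mathbf{E}T\,\mathbf{E}\Re U}.
\]
This is exactly the required inequality, since $\alpha = \mathbf{E}\Re U$ and $\alpha^2+\beta^2 = \abs{\mathbf{E}U}^2$.

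\emph{Equality.} If $Z = vX'$ with $X' > 0$, then $W = \bar{c}vX'$ has constant argument. In that case the triangle inequality is an equality, and Cauchy--Schwarz is also an equality because $T/\Re U = \abs{w}^2/x^2$ is constant. Hence equality holds.

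\emph{Main obstacle.} The main difficulty is spotting the pointwise identity $\abs{U}^2 = T\,\Re U$; once it is found, two applications of Cauchy--Schwarz type inequalities finish the proof.
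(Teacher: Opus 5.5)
Your proof is correct. It shares the paper's skeleton: rotate by $\bar c$, write $\Re\mathbf{H}[W]=\alpha/(\alpha^2+\beta^2)$, reduce to $\alpha\,\mathbf{E}[1/X]\ge\alpha^2+\beta^2$, and finish with Cauchy--Schwarz plus a triangle inequality. The closing step is organized differently, though.

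The paper first cancels $\alpha^2$ using the pointwise identity $\frac{1}{X}-\frac{X}{\abs{W}^2}=\frac{Y^2}{X\abs{W}^2}$. It then applies $\mathbf{E}[\sqrt{AB}]^2\le\mathbf{E}[A]\,\mathbf{E}[B]$ with $A=\frac{Y^2}{X\abs{W}^2}$ and $B=\frac{X}{\abs{W}^2}$, so that $\sqrt{AB}=\abs{\Im U}$. Finally it uses $\abs{\mathbf{E}[\Im U]}\le\mathbf{E}\abs{\Im U}$ on the imaginary part only. You instead use the same Cauchy--Schwarz inequality with $A=T$ and $B=\Re U$, so that $\sqrt{AB}=\abs{U}$. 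You then apply the triangle inequality to the full complex variable $U$.

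Your arrangement buys two things. First, the step $\abs{\mathbf{E}U}\le\mathbf{E}\abs{U}$ is exactly the modulus estimate $\abs{\mathbf{H}[W]}\ge\mathbf{H}[\abs{W}]$ of Section~\ref{s_est_mod}. So the real-part bound reads as that estimate followed by one Cauchy--Schwarz step: $\Re\mathbf{H}[W]=\mathbf{E}[\Re U]/\abs{\mathbf{E}U}^2\ge\mathbf{E}[\Re U]/(\mathbf{E}\abs{U})^2\ge\mathbf{H}[\Re W]$. Second, it yields the converse of the equality clause at once. Equality forces $\abs{\mathbf{E}U}=\mathbf{E}\abs{U}$ with $\mathbf{E}U\ne 0$, so $U$, hence $W$ and $Z$, has almost surely constant argument. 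In the paper's pairing, equality in Cauchy--Schwarz alone only gives $\Im Z=\pm\sqrt{k}\,\Re Z$ pointwise, and the sign must be fixed by the triangle-inequality step. The paper's route, for its part, works entirely with real-valued random variables.

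Cosmetically, delete the abandoned ``which is getting messy'' display. Also call $\Re(1/w)=x/\abs{w}^2$ an identity rather than an inequality.
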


\begin{proof}
Since
\[
c\cdot \mathbf{H}[Z]
= \Re \overline{c}\mathbf{E}[Z^{-1}]^{-1}
% = \Re (\overline{c}^{-1}\mathbf{E}[Z^{-1}])^{-1}
= \Re \mathbf{E}[\overline{c}^{-1}Z^{-1}]^{-1}
= \Re \mathbf{E}[(\overline{c}Z)^{-1}]^{-1}
= \Re \mathbf{H}[\overline{c}Z],
\]
it is enough to show \eqref{e:boundreal}.
Thanks to \eqref{e_calc_chm},
we have
\[
\Re \mathbf{H}[Z]
= \frac{\mathbf{E}\qty[\frac{\Re Z}{\abs{Z}^2}]}{\mathbf{E}\qty[\frac{\Re Z}{\abs{Z}^2}]^2+\mathbf{E}\qty[\frac{\Im Z}{\abs{Z}^2}]^2}.
\]
Since
\[
\mathbf{H}[\Re Z]
= \frac{1}{\mathbf{E}\qty[\frac{1}{\Re Z}]} > 0,
\]
the sign of $\Re \mathbf{H}[Z]-\mathbf{H}[\Re Z]$ coincides with the one of
\[
I
= \mathbf{E}\qty[\frac{1}{\Re Z}]\mathbf{E}\qty[\frac{\Re Z}{\abs{Z}^2}]-\mathbf{E}\qty[\frac{\Re Z}{\abs{Z}^2}]^2-\mathbf{E}\qty[\frac{\Im Z}{\abs{Z}^2}]^2.
\]
Noting that
\[
\mathbf{E}\qty[\frac{1}{\Re Z}]-\mathbf{E}\qty[\frac{\Re Z}{\abs{Z}^2}]
= \mathbf{E}\qty[\frac{1}{\Re Z}-\frac{\Re Z}{\abs{Z}^2}]
= \mathbf{E}\qty[\frac{(\Im Z)^2}{\Re Z\abs{Z}^2}],
\]
we have
\[
I
= \mathbf{E}\qty[\frac{(\Im Z)^2}{\Re Z\abs{Z}^2}]\mathbf{E}\qty[\frac{\Re Z}{\abs{Z}^2}]-\mathbf{E}\qty[\frac{\Im Z}{\abs{Z}^2}]^2.
\]
Now, by applying the Hölder's inequality $\mathbf{E}[X Y]^2 \le \mathbf{E}[X^2]\mathbf{E}[Y^2]$, one will obtain
\[
I \ge \mathbf{E}\qty[\sqrt{\frac{(\Im Z)^2}{\abs{Z}^4}}]^2-\mathbf{E}\qty[\frac{\Im Z}{\abs{Z}^2}]^2 \ge 0,
\]
which implies $\Re \mathbf{H}[Z] \ge \mathbf{H}[\Re Z]$.

The equality holds when the real-valued random variables $\frac{(\Im Z)^2}{\Re Z\abs{Z}^2}$ and $\frac{\Re Z}{\abs{Z}^2}$ are linear dependent so that
\[
\frac{(\Im Z)^2}{\Re Z\abs{Z}^2} = k\frac{\Re Z}{\abs{Z}^2}
\]
for some $k \ge 0$.
Hence, $\Im Z = \pm\sqrt{k}\Re Z$ and so $Z = (1\pm\sqrt{k}i)\Re Z$.
\end{proof}

\section{Bounds by circles}
\label{s_bd_circ}

In this section we show bounds for complex harmonic means based on the range.
Our main observation is that inversion preserves disks \cite[Chapter VII, Section 5]{r_l99}, which leads to the following geometric bound.

\begin{theorem}[Bounds by circles]
Let $D$ be a disk not containing $0$ in the complex plane.
More precisely, let
\[
D = \{ z \in \mathbb{C} \mid \abs{z-c} \le r \}
\]
with $c \in \mathbb{C}$ and $r > 0$ satisfying $r \le \abs{c}$.
If a non-zero complex-valued random variable $Z$ satisfies
\[
\operatorname{Range}[Z] \subset D,
\]
then
\[
\mathbf{H}[Z] \in D
\]
holds.
\end{theorem}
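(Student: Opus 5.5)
The plan is to pass to the image of $D$ under the inversion $w = z^{-1}$, average there using convexity, and then invert back. We split into two cases according to whether $r < \abs{c}$ or $r = \abs{c}$.

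\emph{Case $r<\abs{c}$.} Here $0 \notin D$ and the inversion maps $D$ onto a closed disk
\[
D' = \{ w \in \mathbb{C} \mid \abs{w - c'} \le r' \}, \qquad c' = \frac{\overline{c}}{\abs{c}^2 - r^2},\quad r' = \frac{r}{\abs{c}^2 - r^2},
\]
which again does not contain $0$. To verify this, expand $\abs{z-c}^2 \le r^2$ as $\abs{z}^2 - 2\Re(\bar c z) + \abs{c}^2 - r^2 \le 0$. Substitute $z = 1/w$, multiply by $\abs{w}^2>0$, and divide by $\abs{c}^2-r^2>0$. Completing the square then gives the disk $D'$.

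Since $\operatorname{Range}[Z] \subset D$, the random variable $Z^{-1}$ takes values in $D'$. This set is bounded, so $\mathbf{E}[Z^{-1}]$ exists. Because $D'$ is closed and convex, the expectation lies in $D'$; one can see this, for instance, by averaging the supporting half-plane inequalities $\Re(\bar u\,(w-c')) \le r'\abs{u}$ over $w = Z^{-1}$ for every direction $u$. In particular $\mathbf{E}[Z^{-1}] \neq 0$, because $0 \notin D'$. So $\mathbf{H}[Z]$ exists, and applying the inversion once more (which maps $D'$ back to $D$ bijectively) gives $\mathbf{H}[Z] \in D$.

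\emph{Case $r = \abs{c}$.} Now $0 \in \partial D$, and the image of $D\setminus\{0\}$ under inversion is the closed half-plane
\[
\{ w \mid \Re(c w) \ge \tfrac12 \}.
\]
This follows from dividing $\abs{z}^2 - 2\Re(\bar c z) \le 0$ by $\abs{z}^2$, which gives $1 \le 2\Re(\bar c \bar w) = 2\Re(c w)$. The half-plane is convex and closed and excludes $0$. Hence, whenever $\mathbf{E}[Z^{-1}]$ exists (as it must, since the statement presupposes $\mathbf{H}[Z]$), it lies in this half-plane by linearity of expectation, and in particular it is nonzero. Inverting back lands in $D\setminus\{0\}$.

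The main obstacle is getting the inversion image exactly right, including the boundary case $r=\abs{c}$ where the disk degenerates to a half-plane, and checking that the image set excludes $0$ so that the final inversion is legitimate. The convexity step itself is routine.
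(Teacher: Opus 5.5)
Your proof is correct and follows the paper's argument. You invert $D$ to a closed disk $D'$ (or to a closed half-plane when $r=\abs{c}$), use convexity to place $\mathbf{E}[Z^{-1}]$ in that image, and invert back. Your explicit formulas for $c'$, $r'$ and the half-plane $\Re(cw)\ge\tfrac12$, together with your check that $0$ lies outside the image so that $\mathbf{E}[Z^{-1}]\neq 0$, fill in details the paper leaves implicit.
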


A point of the proof is
that the inversion $z \mapsto z^{-1}$ maps a circle to a circle, a disk to a disk,
and that disks are convex.
We remark that the expectation of a complex-valued random variable whose range is contained in a closed convex set is in the same set.
See, e.g., \cite[Lemma 3.1]{r_n19}.

\begin{proof}
In the case $r < \abs{c}$, the inversion maps the boundary circle $C$ of $D$ to a circle $C'$ and $D$ to the closed disk $D'$ by $C$ since $0 \notin D$ (so $\infty \notin D'$).
Now, since $\operatorname{Range}[Z^{-1}] \subset D'$ and $D'$ is closed convex, we have $\mathbf{E}[Z^{-1}] \in D'$.
Again, by the inversion, we see that $\mathbf{H}[Z] = \mathbf{E}[Z^{-1}]^{-1} \in D$.

The case $r = \abs{c}$ is similar and a difference is just that the inversion maps $D$ to a closed half-plane.
Since a closed half-plane is convex,
one can see that $\mathbf{H}[Z] = \mathbf{E}[Z^{-1}]^{-1} \in D$ as well.
\end{proof}

\begin{remark}
We remark that the shape of $D$ in the theorem can be extended to images of closed convex sets $D'$ under inversion
such as intersections of disks or images of triangles.
\end{remark}

\section{Two-point cases revisited}
\label{s_twoval}

Consider the case when the range of $Z$ consists of two values $c_1 \ne c_2$, $c_1, c_2 \ne 0$, i.e.\ $\operatorname{Range}[Z] = \{ c_1, c_2 \}$.
In this case, $\operatorname{Range}[Z^{-1}] = \{ c_1^{-1}, c_2^{-1} \}$
and the expectation $\mathbf{E}[Z^{-1}]$ moves along the line segment between $c_1^{-1}$ and $c_2^{-1}$.
Since the inversion maps a line to a circle going through $0$,
we have the following result.

\begin{theorem}[Two-point cases]
Let $c_1, c_2 \ne 0$ satisfy $c_1 \ne c_2$.
Let a non-zero complex-valued random variable $Z$ satisfy
\[
\operatorname{Range}[Z] = \{ c_1, c_2 \}.
\]
\begin{itemize}
\item
If $c_1$, $c_2$, $0$ are not on a common line,
then the complex harmonic mean $\mathbf{H}[Z]$ moves along the arc of the circle passing through $c_1$, $c_2$ and $0$ (uniquely determined) between $c_1$ and $c_2$ without $0$.
\item
If $c_1$, $c_2$, $0$ are on a common line and $0$ is not on the line segment between $c_1$ and $c_2$,
then the complex harmonic mean $\mathbf{H}[Z]$ moves along the line segment between $c_1$ and $c_2$.
\end{itemize}
\end{theorem}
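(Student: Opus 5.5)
The idea is to reduce everything to the behavior of the expectation $\mathbf{E}[Z^{-1}]$ under the inversion $\iota(z)=z^{-1}$, as in the proof of the theorem on bounds by circles. Write $\theta\in[0,1]$ for the probability that $Z=c_2$. Then
\[
\mathbf{E}[Z^{-1}]=(1-\theta)c_1^{-1}+\theta c_2^{-1}=:w(\theta).
\]
As $\theta$ runs over $[0,1]$, $w(\theta)$ runs over the closed segment $S$ between $c_1^{-1}$ and $c_2^{-1}$. Since $c_1\ne c_2$, the endpoints differ and $S$ is a genuine segment. We then have $\mathbf{H}[Z]=\iota(w(\theta))$ whenever $w(\theta)\ne0$. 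So the task is to decide when $0\in S$ and to identify $\iota(S)$.

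\emph{First case.} Suppose $c_1,c_2,0$ are not collinear. Since $\iota$ maps lines through $0$ to themselves, $c_1^{-1},c_2^{-1},0$ are also not collinear. Hence the line $L$ through $c_1^{-1}$ and $c_2^{-1}$ misses $0$, and $w(\theta)\ne0$ for all $\theta$, so $\mathbf{H}[Z]$ exists. By the circle-preserving property of inversion \cite[Chapter VII, Section 5]{r_l99}, $\iota(L)\cup\{0\}$ is a circle $C$ through $0$ that also passes through $c_1=\iota(c_1^{-1})$ and $c_2=\iota(c_2^{-1})$. Three non-collinear points determine a unique circle, so $C$ is the circle through $c_1,c_2,0$.

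\emph{The arc.} The segment $S$ is the bounded component of $L$ cut at its endpoints; the complementary part $L\setminus S$ is the part containing the point at infinity. The map $\iota$ is a homeomorphism of the Riemann sphere with $\iota(\infty)=0$. So $\iota(S)$ is the connected closed arc of $C$ from $c_1$ to $c_2$ that does not contain $0$, and $\theta\mapsto\mathbf{H}[Z]$ traverses it continuously. I expect this point to need the most care, namely justifying that the image is exactly the arc avoiding $0$ rather than the complementary arc. The argument via $\infty\notin S$, $\iota(\infty)=0$ and connectedness handles it cleanly. For a concrete check one could instead use the explicit formula from Section \ref{s_ex}, $h=c_1c_2/(c_1\theta+c_2(1-\theta))$.

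\emph{Second case.} Suppose $c_1,c_2,0$ lie on a line $\ell$ through $0$ and $0\notin[c_1,c_2]$. Then $c_1=tc_2$ with $t>0$ real, so $c_1,c_2$ lie on the same open ray $\{sv: s>0\}$ with $v=c_2/|c_2|$. Write $Z=vX$ with $X$ positive-valued. Then $\mathbf{H}[Z]=v\,\mathbf{H}[X]$, and $\mathbf{H}[X]$ is a classical harmonic mean of the two positive values $|c_1|,|c_2|$. It ranges continuously, and by monotonicity in $\theta$, over the interval between them. Hence $\mathbf{H}[Z]$ moves along the segment between $c_1$ and $c_2$. Equivalently, $\iota$ maps the ray to itself, so $S$ lies on that ray, avoids $0$, and $\iota(S)=[c_1,c_2]$.
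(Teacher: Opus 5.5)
Your proof is correct. For the non-collinear case it is the paper's argument: the paper only notes that $\mathbf{E}[Z^{-1}]$ runs over the segment between $c_1^{-1}$ and $c_2^{-1}$ and that inversion sends a line to a circle through $0$. Your use of $\iota(\infty)=0$ to show that the image of the segment is the arc \emph{avoiding} $0$ supplies the step the paper leaves implicit.

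For the collinear case you take a different route. You reduce to the classical harmonic mean via $Z=vX$, the same device as in the equality cases of Sections \ref{s_est_mod} and \ref{s_est_real}. The paper instead handles both cases uniformly by inversion, with the collinear case as the degenerate ``circle through $0$''. Your reduction is self-contained and does not need inversion at all, while the paper's version shows the common geometry of both cases.

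One correction to a stated fact. The map $z\mapsto z^{-1}$ does \emph{not} fix lines or rays through $0$: since $(sv)^{-1}=\bar v/s$ for $\abs{v}=1$ and $s>0$, it sends the ray through $v$ to the ray through $\bar v$. The ray-fixing property belongs to the geometric inversion $z\mapsto 1/\bar z$.

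In your first case this does not matter, because you only need that collinearity with $0$ is preserved. That follows either from the weaker fact that lines through $0$ go to lines through $0$, or directly: $c_2/c_1\in\mathbb{R}$ if and only if $c_1/c_2\in\mathbb{R}$.

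In your ``equivalently'' remark for the second case, however, the claim that $S$ lies on the ray through $c_1,c_2$ is false. $S$ lies on the conjugate ray $\{s\bar v : s>0\}$. The conclusion $\iota(S)=[c_1,c_2]$ still holds, because $\iota$ maps that conjugate ray back onto the original one. Your main argument via $Z=vX$ is unaffected.
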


\section*{Acknowledgements}
The author thanks Hiroaki~Deguchi and Takayuki~Yamada for fruitful discussions and remarkable notes on the theory of viscoelastic composites.
This work was supported by JSPS KAKENHI Grant Number JP25K17275.

\end{document}